\newtheorem{theorem}{Theorem}[section]
\newtheorem{lemma}{Lemma}[section]
\newtheorem{corollary}{Corollary}[section]
\theoremstyle{remark}
\newtheorem{example}{Example}[section]
\theoremstyle{remark}
\begin{document}
\title[Partial sums of Normalized Dini Functions]{Partial sums of the
Normalized Dini Functions}
\author{HAL\.{I}T ORHAN$^{1}$, \.{I}BRAH\.{I}M AKTA\c{S}$^{2}$}
\address{$^{1}$Department of Mathematics, Faculty of Science, Atat\"{u}rk
University, Erzurum, 25240, Turkey.}
\email{orhanhalit607@gmail.com}
\address{$^{2}$Corresponding author, Department of Mathematical Engineering,
Faculty of Engineering and Natural Science, G\"{u}m\"{u}\c{s}hane
University, G\"{u}m\"{u}\c{s}hane, 29100, Turkey.}
\email{aktasibrahim38@gmail.com}
\date{17.05.2016}

\begin{abstract}
Let $\left( w_{\alpha ,v}\right)
_{m}(z)=z+\dsum\limits_{n=1}^{m}a_{n}z^{n+1} $ be the sequence of partial
sums of normalized Dini functions $w_{\alpha
,v}(z)=z+\dsum\limits_{n=1}^{\infty }a_{n}z^{n+1}$ where $a_{n}=\frac{\left(
-1\right) ^{n}\left( 2n+\alpha \right) }{\alpha 4^{n}n!\left( v+1\right) _{n}%
}$. The aim of the present paper is to obtain lower bounds for $\mathcal{R}%
\left\{ \frac{w_{\alpha ,v}(z)}{\left( w_{\alpha ,v}\right) _{m}(z)}\right\}
,$ $\mathcal{R}\left\{ \frac{\left( w_{\alpha ,v}\right) _{m}(z)}{w_{\alpha
,v}(z)}\right\} ,$ $\mathcal{R}\left\{ \frac{w_{\alpha ,v}^{^{\prime }}(z)}{%
\left( w_{\alpha ,v}\right) _{m}^{^{\prime }}(z)}\right\} $ and $\mathcal{R}%
\left\{ \frac{\left( w_{\alpha ,v}\right) _{m}^{^{\prime }}(z)}{w_{\alpha
,v}^{^{\prime }}(z)}\right\} $. Also we give a few geometric description
regarding image domains of some functions.
\end{abstract}

\subjclass[2000]{33C10, 30C45}
\keywords{Dini functions; Bessel function; Analytic function; Univalent,
Subordination}
\maketitle

\section{Introduction}

Let $\mathcal{A}$ denote the family of functions $f$ of the form 
\begin{equation}
f(z)=z+\sum\limits_{k=1}^{\infty }a{_{k}z^{k+1}}  \label{1.1}
\end{equation}%
which are analytic in the open unit disk $\mathcal{U}=\left\{ z\in \mathcal{C%
}:\left\vert z\right\vert <1\right\} $ and satisfy the usual normalization
condition $f(0)=f^{\prime }(0)-1=0.$ Let $\mathcal{S}$ denote the subclass
of $\mathcal{A}$ which are univalent in $\mathcal{U}$. Also let $\mathcal{S}%
^{\ast }$and $\mathcal{C}$ denote the subclasses of $\mathcal{A}$ consisting
of functions which are starlike and convex in $\mathcal{U}$,respectively.
These classes can characterize as follows:%
\begin{equation}
f\in \mathcal{S}^{\ast }\Leftrightarrow \Re \left( \frac{zf^{\prime }(z)}{%
f(z)}\right) >0,  \label{1.2}
\end{equation}%
\begin{equation}
f\in \mathcal{C}\Leftrightarrow \Re \left( 1+\frac{zf^{\prime \prime }(z)}{%
f^{\prime }(z)}\right) >0.  \label{1.3}
\end{equation}%
\qquad\ \ \ \ \ 

Some special functions like Bessel functions play an important role in
applied mathematics and physics. We know that the Bessel functions of the
first kind $J_{v}$ is defined by(see \cite{2} and \cite{12})%
\begin{equation}
J_{v}(z)=\dsum\limits_{n=0}^{\infty }\frac{\left( -1\right) ^{n}}{n!\Gamma
\left( v+n+1\right) }\left( \frac{z}{2}\right) ^{2n+v}\text{ }(z\in \mathcal{%
C})  \label{1.4}
\end{equation}%
where $\Gamma (z)$ stands for Euler gamma function and it is a particular
solution of the second-order linear homogeneous differential equation(see,
for details, \cite{2} and \cite{12}):%
\begin{equation}
z^{2}y^{^{\prime \prime }}(z)+zy^{^{\prime }}(z)+\left[ z^{2}-v^{2}\right]
y(z)=0  \label{1.5}
\end{equation}%
where $v\in \mathcal{C}.$

Now, we consider the function $w_{\alpha ,v}:\mathcal{U}\rightarrow \mathcal{%
C},$ defined by(see \cite{4}) 
\begin{eqnarray}
w_{\alpha ,v}(z) &=&\frac{2^{v}}{\alpha }\Gamma (v+1)z^{1-\frac{v}{2}%
}((\alpha -v)J_{v}(\sqrt{z})+\sqrt{z}J_{v}^{^{\prime }}(\sqrt{z}))
\label{1.6} \\
&=&\dsum\limits_{n=0}^{\infty }\frac{\left( -1\right) ^{n}(2n+\alpha )\Gamma
(v+1)z^{n+1}}{\alpha .4^{n}n!\Gamma \left( v+n+1\right) }.  \notag
\end{eqnarray}%
On the other hand we know that there is following relation in between
Pochhammer Symbol (or Appell) and Euler's gamma function:%
\begin{equation}
\left( \mu \right) _{n}=\frac{\Gamma (\mu +n)}{\Gamma (\mu )}=\mu (\mu
+1)...(\mu +n-1).  \label{1.7}
\end{equation}%
If we use the relation (1.7) we can obtain the following series
representation for the functions $w_{\alpha ,v}(z)$ given by (1.6):%
\begin{equation}
w_{\alpha ,v}(z)=z+\dsum\limits_{n=1}^{\infty }a_{n}z^{n+1}  \label{1.8}
\end{equation}%
where $a_{n}=\frac{\left( -1\right) ^{n}(2n+\alpha )}{\alpha
.4^{n}n!(v+1)_{n}}.$

In this study, we will investigate the ratio of a function of the form (1.8)
to its sequence of partial sums $\left( w_{\alpha ,v}(z)\right)
_{m}=z+\dsum\limits_{n=1}^{m}a_{n}z^{n+1}$ when the coefficients of $%
w_{\alpha ,v}$ satisfy some conditions. We will obtain lower bounds for $%
\mathcal{R}\left\{ \frac{w_{\alpha ,v}(z)}{\left( w_{\alpha ,v}\right)
_{m}(z)}\right\} ,$ $\mathcal{R}\left\{ \frac{\left( w_{\alpha ,v}\right)
_{m}(z)}{w_{\alpha ,v}(z)}\right\} ,$ $\mathcal{R}\left\{ \frac{w_{\alpha
,v}^{^{\prime }}(z)}{\left( w_{\alpha ,v}\right) _{m}^{^{\prime }}(z)}%
\right\} $ and $\mathcal{R}\left\{ \frac{\left( w_{\alpha ,v}\right)
_{m}^{^{\prime }}(z)}{w_{\alpha ,v}^{^{\prime }}(z)}\right\} .$

For interesting developments on the partial sums of some special functions
and the some classes of analytic functions, the readers can browse to the
works of Orhan and Yagmur \cite{7}, \c{C}a\u{g}lar and Deniz \cite{6},
Brickman et al. \cite{5}, Silverman \cite{10}, Owa et al. \cite{8},
Sheil-Small \cite{9}, Silvia \cite{11}, Akta\c{s} and Orhan \cite{1}.

\begin{lemma}
If $\alpha >0$ and $v>-\frac{7}{8},$ then the function%
\begin{equation*}
w_{\alpha ,v}:\mathcal{U}\rightarrow \mathcal{C}
\end{equation*}%
given by (1.6) satisfies the follwing inequalities:%
\begin{equation}
\left\vert w_{\alpha ,v}(z)\right\vert \leq 1+\frac{32v+2\alpha +32}{\left(
8v+7\right) \alpha },  \label{1.9}
\end{equation}%
\begin{equation}
\left\vert w_{\alpha ,v}^{^{\prime }}(z)\right\vert \leq 1+\frac{256\left(
2+\alpha \right) v^{2}+8\left( 64+29\alpha \right) v+210\alpha +512}{\left(
8v+7\right) ^{3}\alpha }.  \label{1.10}
\end{equation}
\end{lemma}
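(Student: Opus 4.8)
The plan is to estimate both moduli by the triangle inequality and thereby reduce everything to the convergent numerical series $\sum_{n=1}^{\infty}\left|a_{n}\right|$ and $\sum_{n=1}^{\infty}(n+1)\left|a_{n}\right|$. Since $\left|z\right|<1$ and $w_{\alpha,v}(z)=z+\sum_{n=1}^{\infty}a_{n}z^{n+1}$, termwise estimation gives
\begin{equation*}
\left|w_{\alpha,v}(z)\right|\leq 1+\sum_{n=1}^{\infty}\left|a_{n}\right|,\qquad \left|w_{\alpha,v}^{\prime}(z)\right|\leq 1+\sum_{n=1}^{\infty}(n+1)\left|a_{n}\right|,
\end{equation*}
so it suffices to bound these two coefficient sums by the right-hand sides of (1.9) and (1.10).

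First I would construct a geometric majorant for $\left|a_{n}\right|$. For $n\geq 1$ and $v>-\frac{7}{8}$ (so that $v+1>\frac{1}{8}>0$) the elementary inequalities
\begin{equation*}
n!\geq 2^{n-1}\qquad\text{and}\qquad (v+1)_{n}=(v+1)(v+2)\cdots(v+n)\geq (v+1)^{n}
\end{equation*}
give $4^{n}n!(v+1)_{n}\geq \tfrac{1}{2}\bigl(8(v+1)\bigr)^{n}$, whence
\begin{equation*}
\left|a_{n}\right|=\frac{2n+\alpha}{\alpha\,4^{n}n!(v+1)_{n}}\leq \frac{2(2n+\alpha)}{\alpha}\,q^{n},\qquad q:=\frac{1}{8(v+1)}.
\end{equation*}
The hypothesis $v>-\frac{7}{8}$ is exactly what forces $q<1$, so every series below converges; this is the structural reason the bound $8v+7=8(v+1)-1$ appears in the denominators. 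Splitting $2n+\alpha$ and summing the geometric series $\sum_{n\geq1}q^{n}=\tfrac{q}{1-q}$ together with its first moment $\sum_{n\geq1}nq^{n}=\tfrac{q}{(1-q)^{2}}$, and using $1-q=\tfrac{8v+7}{8(v+1)}$, collapses $\sum_{n=1}^{\infty}\left|a_{n}\right|$ into a rational function of $v$ and $\alpha$ whose denominator is a power of $8v+7$; matching this against the stated numerator $32v+2\alpha+32$ yields (1.9).

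For (1.10) the same majorant is used, but the extra factor $n+1$ now requires the second moment
\begin{equation*}
\sum_{n\geq1}n^{2}q^{n}=\frac{q(1+q)}{(1-q)^{3}},
\end{equation*}
and it is precisely the cube $(1-q)^{3}$ that manufactures the factor $(8v+7)^{3}$ in (1.10). Expanding $(n+1)(2n+\alpha)=2n^{2}+(\alpha+2)n+\alpha$, summing the three moments term by term, and placing everything over the common denominator $\alpha(8v+7)^{3}$ should reproduce the quadratic numerator $256(2+\alpha)v^{2}+8(64+29\alpha)v+210\alpha+512$; a quick check confirms that the leading coefficient of the $n^{2}$-moment already returns the correct $v^{2}$-coefficient $256(2+\alpha)$. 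I expect the main obstacle to be exactly this final bookkeeping: one must evaluate the first and second geometric moments correctly, track the many powers of $8(v+1)$ that cancel, and verify that the collected numerator dominates termwise for every $\alpha>0$ and every $v>-\frac{7}{8}$, including the delicate range $-\frac{7}{8}<v<-\frac{3}{4}$ in which $8v+7<1$. The analytic input itself—the two elementary lower bounds and the convergence criterion $q<1$—is entirely routine.
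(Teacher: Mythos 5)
Your outline is exactly the paper's own proof: termwise triangle inequality, the elementary bounds $n!\ge 2^{n-1}$ and $(v+1)_n\ge (v+1)^n$, and then the geometric series and its first and second moments in the ratio $q=1/(8(v+1))$. As a method there is nothing to add, and your identification of $v>-\tfrac78$ as the convergence condition $q<1$ is the right structural remark.

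The gap is in the one step you defer, the ``final bookkeeping,'' and it is a step that in fact does not close as you (and the paper) assert. Carrying out the summation for (1.9) exactly gives
\begin{equation*}
\sum_{n\ge 1}\frac{2(2n+\alpha)}{\alpha}\,q^{n}
=\frac{4}{\alpha}\cdot\frac{q}{(1-q)^{2}}+\frac{2q}{1-q}
=\frac{32(v+1)}{\alpha(8v+7)^{2}}+\frac{2}{8v+7},
\end{equation*}
whereas the right-hand side of (1.9) minus $1$ equals $\frac{32(v+1)}{\alpha(8v+7)}+\frac{2}{8v+7}$. The stated bound therefore follows only after the additional inequality $(8v+7)^{2}\ge 8v+7$, i.e.\ $v\ge-\tfrac34$; on the range $-\tfrac78<v<-\tfrac34$ that you yourself flag as delicate, the comparison goes the wrong way, so neither your argument nor the paper's (which silently writes an equality at this point) establishes (1.9) there. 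The situation for (1.10) is worse: collecting the three moments over the common denominator $\alpha(8v+7)^{3}$ yields the linear coefficient $16(64+29\alpha)$, exactly twice the printed $8(64+29\alpha)$. A concrete check at $\alpha=1$, $v=\tfrac12$ gives the majorant sum $\tfrac{1658}{1331}$ against the claimed $\tfrac{1286}{1331}$, and since already the single term $2|a_{1}|=1>\tfrac{1286}{1331}$, no triangle-inequality argument can reach the stated constant in (1.10). So the obstacle you anticipated is real, but it is a defect of the lemma's constants rather than of your method: with the corrected closed forms (and, for (1.9), the restriction $8v+7\ge 1$) your outline goes through verbatim.
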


\begin{proof}
\textit{By using the well-known triangle inequality:}%
\begin{equation}
\left\vert z_{1}+z_{2}\right\vert \leq \left\vert z_{1}\right\vert
+\left\vert z_{2}\right\vert  \label{1.11}
\end{equation}%
\textit{\ and the inequalities }

\begin{equation}
\left( v+1\right) ^{n}\leq \left( v+1\right) _{n}\text{ and }2^{n-1}\leq n!
\label{1.12}
\end{equation}%
for $n\in \mathcal{%
%TCIMACRO{\U{2115} }%
%BeginExpansion
\mathbb{N}
%EndExpansion
}$\textit{\ }$=\left\{ 1,2,...\right\} ,$ we get%
\begin{eqnarray*}
\left\vert w_{\alpha ,v}(z)\right\vert &=&\left\vert
z+\dsum\limits_{n=1}^{\infty }\frac{\left( -1\right) ^{n}(2n+\alpha )}{%
\alpha .4^{n}n!(v+1)_{n}}z^{n+1}\right\vert \leq
1+\dsum\limits_{n=1}^{\infty }\frac{2n+\alpha }{\alpha .4^{n}2^{n-1}(v+1)^{n}%
} \\
&=&1+\frac{2}{4\alpha \left( v+1\right) }\dsum\limits_{n=1}^{\infty }\frac{n%
}{\left( 8\left( v+1\right) \right) ^{n-1}}+\frac{1}{4\left( v+1\right) }%
\dsum\limits_{n=1}^{\infty }\left( \frac{1}{8\left( v+1\right) }\right)
^{n-1} \\
&=&1+\frac{32v+2\alpha +32}{\left( 8v+7\right) \alpha }.
\end{eqnarray*}%
\textit{If we consider inequality (1.10), then we have}%
\begin{eqnarray*}
\left\vert w_{\alpha ,v}^{^{\prime }}(z)\right\vert &=&\left\vert
1+\dsum\limits_{n=1}^{\infty }\frac{\left( -1\right) ^{n}(2n+\alpha )\left(
n+1\right) }{\alpha .4^{n}n!(v+1)_{n}}z^{n}\right\vert \leq
1+\dsum\limits_{n=1}^{\infty }\frac{2n^{2}+\left( 2+\alpha \right) n+\alpha 
}{\alpha .4^{n}2^{n-1}(v+1)^{n}} \\
&=&1+\frac{1}{\left( v+1\right) }\left\{ \frac{1}{2\alpha }%
\dsum\limits_{n=1}^{\infty }\frac{n^{2}}{\left( 8\left( v+1\right) \right)
^{n-1}}+\frac{\left( 2+\alpha \right) }{4\alpha }\dsum\limits_{n=1}^{\infty }%
\frac{n}{\left( 8\left( v+1\right) \right) ^{n-1}}+\frac{1}{4}%
\dsum\limits_{n=1}^{\infty }\left( \frac{1}{8\left( v+1\right) }\right)
^{n-1}\right\} \\
&=&1+\frac{256\left( 2+\alpha \right) v^{2}+8\left( 64+29\alpha \right)
v+210\alpha +512}{\left( 8v+7\right) ^{3}\alpha }.
\end{eqnarray*}
\end{proof}

\section{Main Results}

\begin{theorem}
Let $\alpha >0,$ $v>-\frac{7}{8},$ the function $w_{\alpha ,v}:\mathcal{U}%
\rightarrow \mathcal{C}$ defined by (1.8) and its sequence of partial sums $%
\left( w_{\alpha ,v}(z)\right) _{m}=z+\dsum\limits_{n=1}^{m}a_{n}z^{n+1}$.
If the inequality%
\begin{equation*}
8\left( \alpha -4\right) v+5\alpha -32\geq 0
\end{equation*}%
is true, then we have the followings:%
\begin{equation}
\mathcal{R}\left\{ \frac{w_{\alpha ,v}(z)}{\left( w_{\alpha ,v}\right)
_{m}(z)}\right\} \geq \frac{8\left( \alpha -4\right) v+5\alpha -32}{\left(
8v+7\right) \alpha },  \tag{2.1}
\end{equation}%
\begin{equation}
\mathcal{R}\left\{ \frac{\left( w_{\alpha ,v}\right) _{m}(z)}{w_{\alpha
,v}(z)}\right\} \geq \frac{\left( 8v+7\right) \alpha }{\left( 8v+7\right)
\alpha +32v+2\alpha +32}.  \tag{2.2}
\end{equation}
\end{theorem}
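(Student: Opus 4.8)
The plan is to use the standard partial-sums technique of Silverman, which rests on the elementary equivalence that for a complex quantity $w$, the inequality $\mathcal{R}\{w\}\geq \gamma$ (with $0<\gamma<1$) is controlled by showing $\left\vert \frac{w-1}{w+1-2\gamma}\right\vert \leq 1$ on $\mathcal{U}$. First I would record the key coefficient estimate coming from the same bounds used in Lemma 1.1, namely the inequalities $(v+1)^{n}\leq (v+1)_{n}$ and $2^{n-1}\leq n!$ for $n\geq 1$. Summing $\sum_{n=1}^{\infty}\frac{2n+\alpha}{\alpha 4^{n}2^{n-1}(v+1)^{n}}$ exactly as in the proof of Lemma 1.1 yields the bound $\sum_{n=1}^{\infty}|a_{n}|\leq \frac{32v+2\alpha+32}{(8v+7)\alpha}$. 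The hypothesis $8(\alpha-4)v+5\alpha-32\geq 0$ is exactly the rearrangement of the condition $\frac{32v+2\alpha+32}{(8v+7)\alpha}\leq 1$, i.e. $\sum_{n=1}^{\infty}|a_{n}|\leq 1$; this is the fact I would establish first and cite throughout.

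For inequality (2.1), set $\gamma = \frac{8(\alpha-4)v+5\alpha-32}{(8v+7)\alpha}$ and consider the auxiliary function
\begin{equation*}
g(z)=\frac{1-\gamma}{1}\left\{ \frac{w_{\alpha,v}(z)}{\left(w_{\alpha,v}\right)_{m}(z)}-\gamma\right\}.
\end{equation*}
Because $\left(w_{\alpha,v}\right)_{m}(z)=z+\sum_{n=1}^{m}a_{n}z^{n+1}$ and $w_{\alpha,v}(z)-\left(w_{\alpha,v}\right)_{m}(z)=\sum_{n=m+1}^{\infty}a_{n}z^{n+1}$, dividing by $z$ converts everything into a ratio of power series whose constant terms are $1$. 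Writing $g(z)=\frac{1+A(z)}{1+B(z)}$ where $A$ collects the tail coefficients $a_{n}$ for $n\geq m+1$ and $B$ the partial-sum coefficients $a_{n}$ for $1\leq n\leq m$, the goal is to show $\left\vert \frac{g(z)-1}{g(z)+1}\right\vert \leq 1$. The crucial numerical step is the inequality
\begin{equation*}
\frac{\sum_{n=m+1}^{\infty}|a_{n}|}{1-\sum_{n=1}^{m}|a_{n}|}\leq 1,
\end{equation*}
which follows from $\sum_{n=1}^{m}|a_{n}|+\sum_{n=m+1}^{\infty}|a_{n}|=\sum_{n=1}^{\infty}|a_{n}|\leq 1$, the very inequality forced by the hypothesis. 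This reduction to a tail-versus-remainder comparison is the heart of the argument and is where the specific value of $\gamma$ must be matched precisely to the coefficient sum.

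For inequality (2.2) the approach is dual: I would form
\begin{equation*}
h(z)=\left(1+\sum_{n=1}^{\infty}|a_{n}|\right)\left\{ \frac{\left(w_{\alpha,v}\right)_{m}(z)}{w_{\alpha,v}(z)}-\frac{(8v+7)\alpha}{(8v+7)\alpha+32v+2\alpha+32}\right\}
\end{equation*}
and again verify a modulus-at-most-one condition of Schwarz type. Here the denominator $w_{\alpha,v}(z)$ is the full series, so the relevant comparison again pits the tail sum $\sum_{n=m+1}^{\infty}|a_{n}|$ against $1$ plus the full coefficient sum, and the stated lower bound $\frac{(8v+7)\alpha}{(8v+7)\alpha+32v+2\alpha+32}$ is exactly $\frac{1}{1+\sum|a_{n}|}$ with $\sum_{n=1}^{\infty}|a_{n}|$ replaced by its upper bound $\frac{32v+2\alpha+32}{(8v+7)\alpha}$. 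I expect the main obstacle to be purely bookkeeping: carefully tracking which coefficients land in the numerator versus the denominator of the Schwarz quotient and confirming that the algebraic simplification of $\frac{|g(z)-1|}{|g(z)+1|}$ collapses to the clean tail-sum ratio. The analytic content is entirely contained in the single summation bound $\sum_{n=1}^{\infty}|a_{n}|\leq 1$, after which both estimates are formal consequences of the triangle inequality applied inside $\mathcal{U}$.
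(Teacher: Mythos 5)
Your overall strategy is the same as the paper's (Silverman's partial-sum technique: normalize the ratio, write it as $(1+p(z))/(1-p(z))$, and verify $\left\vert p(z)\right\vert \leq 1$ via the coefficient bound of Lemma 1.1), but the quantitative core of your reduction is mis-stated, and as written the argument would not deliver the claimed constants. Put $c=\frac{(8v+7)\alpha }{32v+2\alpha +32}$, so that Lemma 1.1 gives $\sum_{n=1}^{\infty }\left\vert a_{n}\right\vert \leq 1/c$ and the hypothesis $8(\alpha -4)v+5\alpha -32\geq 0$ says precisely $c\geq 1$. When you carry out the Schwarz-quotient computation for (2.1), the numerator of $p(z)$ is the \emph{weighted} tail $c\sum_{n=m+1}^{\infty }a_{n}z^{n}$, not the bare tail, and the condition $\left\vert p(z)\right\vert \leq 1$ collapses to
\begin{equation*}
\sum_{n=1}^{m}\left\vert a_{n}\right\vert +c\sum_{n=m+1}^{\infty }\left\vert a_{n}\right\vert \leq 1,
\end{equation*}
not to your unweighted inequality $\sum_{n=1}^{m}\left\vert a_{n}\right\vert +\sum_{n=m+1}^{\infty }\left\vert a_{n}\right\vert \leq 1$. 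The unweighted condition is what you would need to prove only $\mathcal{R}\left\{ w_{\alpha ,v}/(w_{\alpha ,v})_{m}\right\} \geq 0$; to get the stated bound $\gamma =1-1/c$ you must verify the weighted one. That verification uses \emph{both} ingredients separately: first bound the left-hand side by $c\sum_{n=1}^{\infty }\left\vert a_{n}\right\vert$ (this step needs $c\geq 1$, i.e., the hypothesis), then apply Lemma 1.1 to get $c\sum_{n=1}^{\infty }\left\vert a_{n}\right\vert \leq 1$. Your closing claim that the analytic content is entirely contained in the single bound $\sum_{n=1}^{\infty }\left\vert a_{n}\right\vert \leq 1$ is therefore not correct: that inequality alone is strictly weaker than what the proof requires.

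Two smaller slips: your normalizing factor for (2.1) should be $\frac{1}{1-\gamma }=c$, not $1-\gamma$ (otherwise the numerator and denominator of your $\frac{1+A(z)}{1+B(z)}$ do not both have constant term $1$), and for (2.2) the multiplier should be $c+1=\frac{(8v+7)\alpha }{32v+2\alpha +32}+1$ rather than $1+\sum \left\vert a_{n}\right\vert$; the subtracted constant $\frac{(8v+7)\alpha }{(8v+7)\alpha +32v+2\alpha +32}$ is exactly $\frac{c}{c+1}$, and the same weighted tail condition as in (2.4) reappears. Once you restore the weight $c$ on the tail everywhere, the rest of your outline matches the paper's proof.
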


\begin{proof}
From the inequality (1.9) in the Lemma 1.1 we write that%
\begin{equation*}
1+\dsum\limits_{n=1}^{\infty }\left\vert a_{n}\right\vert \leq 1+\frac{%
32v+2\alpha +32}{\left( 8v+7\right) \alpha }
\end{equation*}%
which is equivalent to%
\begin{equation*}
\frac{\left( 8v+7\right) \alpha }{32v+2\alpha +32}\dsum\limits_{n=1}^{\infty
}\left\vert a_{n}\right\vert \leq 1,
\end{equation*}%
where $a_{n}=\frac{\left( -1\right) ^{n}(2n+\alpha )}{\alpha
.4^{n}n!(v+1)_{n}}.$

Now we may write%
\begin{equation}
\frac{\left( 8v+7\right) \alpha }{32v+2\alpha +32}\left\{ \frac{w_{\alpha
,v}(z)}{\left( w_{\alpha ,v}\right) _{m}(z)}-\left( 1-\frac{32v+2\alpha +32}{%
\left( 8v+7\right) \alpha }\right) \right\}  \tag{2.3}
\end{equation}%
\begin{eqnarray*}
&=&\frac{1+\dsum\limits_{n=1}^{m}a_{n}z^{n}+\frac{\left( 8v+7\right) \alpha 
}{32v+2\alpha +32}\dsum\limits_{n=m+1}^{\infty }a_{n}z^{n}}{%
1+\dsum\limits_{n=1}^{m}a_{n}z^{n}} \\
&=&\frac{1+A(z)}{1+B(z)}.
\end{eqnarray*}

Set $\left( 1+A(z)\right) $/$\left( 1+B(z)\right) =(1+p(z))/(1-p(z)),$ so
that $p(z)=\left( A(z)-B(z)\right) /\left( 2+A(z)+B(z)\right) .$ Then%
\begin{equation*}
p(z)=\frac{\frac{\left( 8v+7\right) \alpha }{32v+2\alpha +32}%
\dsum\limits_{n=m+1}^{\infty }a_{n}z^{n}}{2+2\dsum%
\limits_{n=1}^{m}a_{n}z^{n}+\frac{\left( 8v+7\right) \alpha }{32v+2\alpha +32%
}\dsum\limits_{n=m+1}^{\infty }a_{n}z^{n}}
\end{equation*}%
and 
\begin{equation*}
\left\vert p(z)\right\vert \leq \frac{\frac{\left( 8v+7\right) \alpha }{%
32v+2\alpha +32}\dsum\limits_{n=m+1}^{\infty }\left\vert a_{n}\right\vert }{%
2-2\dsum\limits_{n=1}^{m}\left\vert a_{n}\right\vert -\frac{\left(
8v+7\right) \alpha }{32v+2\alpha +32}\dsum\limits_{n=m+1}^{\infty
}\left\vert a_{n}\right\vert }.
\end{equation*}%
Now $\left\vert p(z)\right\vert \leq 1$ if and only if%
\begin{equation}
\dsum\limits_{n=1}^{m}\left\vert a_{n}\right\vert +\frac{\left( 8v+7\right)
\alpha }{32v+2\alpha +32}\dsum\limits_{n=m+1}^{\infty }\left\vert
a_{n}\right\vert \leq 1.  \tag{2.4}
\end{equation}%
It suffices to show that the left hand side of (2.4) is bounded above by $%
\frac{\left( 8v+7\right) \alpha }{32v+2\alpha +32}\dsum\limits_{n=1}^{\infty
}\left\vert a_{n}\right\vert ,$ which is equivalent to%
\begin{equation*}
\frac{8\left( \alpha -4\right) v+5\alpha -32}{32v+2\alpha +32}%
\dsum\limits_{n=1}^{m}\left\vert a_{n}\right\vert \geq 0.
\end{equation*}%
To prove the result (2.2), we write%
\begin{equation}
\left( \frac{\left( 8v+7\right) \alpha }{32v+2\alpha +32}+1\right) \left\{ 
\frac{\left( w_{\alpha ,v}\right) _{m}(z)}{w_{\alpha ,v}(z)}-\frac{\left(
8v+7\right) \alpha }{\left( 8v+7\right) \alpha +32v+2\alpha +32}\right\} 
\tag{2.5}
\end{equation}%
\begin{eqnarray*}
&=&\frac{1+\dsum\limits_{n=1}^{m}a_{n}z^{n}-\frac{\left( 8v+7\right) \alpha 
}{32v+2\alpha +32}\dsum\limits_{n=m+1}^{\infty }a_{n}z^{n}}{%
1+\dsum\limits_{n=1}^{\infty }a_{n}z^{n}} \\
&=&\frac{1+p(z)}{1-p(z)}.
\end{eqnarray*}

Then from (2.5) we get%
\begin{equation*}
p(z)=\frac{-\frac{8(\alpha +4)v+9\alpha +32}{32v+2\alpha +32}%
\dsum\limits_{n=m+1}^{\infty }a_{n}z^{n}}{2+2\dsum%
\limits_{n=1}^{m}a_{n}z^{n}-\frac{8(\alpha -4)v+5\alpha -32}{32v+2\alpha +32}%
\dsum\limits_{n=m+1}^{\infty }a_{n}z^{n}}
\end{equation*}%
and%
\begin{equation*}
\left\vert p(z)\right\vert \leq \frac{\frac{8(\alpha +4)v+9\alpha +32}{%
32v+2\alpha +32}\dsum\limits_{n=m+1}^{\infty }\left\vert a_{n}\right\vert }{%
2-2\dsum\limits_{n=1}^{m}\left\vert a_{n}\right\vert -\frac{8(\alpha
-4)v+5\alpha -32}{32v+2\alpha +32}\dsum\limits_{n=m+1}^{\infty }\left\vert
a_{n}\right\vert }.
\end{equation*}%
Now $\left\vert p(z)\right\vert \leq 1$ if and only if%
\begin{equation}
\dsum\limits_{n=1}^{m}\left\vert a_{n}\right\vert +\frac{\left( 8v+7\right)
\alpha }{32v+2\alpha +32}\dsum\limits_{n=m+1}^{\infty }\left\vert
a_{n}\right\vert \leq 1.  \tag{2.6}
\end{equation}%
Since the left hand side of (2.6) is bounded above by $\frac{\left(
8v+7\right) \alpha }{32v+2\alpha +32}\dsum\limits_{n=1}^{\infty }\left\vert
a_{n}\right\vert ,$ the proof is complated.
\end{proof}

\begin{theorem}
Let $\alpha >0,$ $v>-\frac{7}{8},$ the function $w_{\alpha ,v}:\mathcal{U}%
\rightarrow \mathcal{C}$ defined by (1.8) and its sequence of partial sums $%
\left( w_{\alpha ,v}(z)\right) _{m}=z+\dsum\limits_{n=1}^{m}a_{n}z^{n+1}$.
If the inequality%
\begin{equation*}
512\alpha v^{3}+64\left( 17\alpha -8\right) v^{2}+16\left( 59\alpha
-32\right) v+133\alpha -512\geq 0
\end{equation*}%
is true, then we have the followings:%
\begin{equation}
\mathcal{R}\left\{ \frac{w_{\alpha ,v}^{^{\prime }}(z)}{\left( w_{\alpha
,v}\right) _{m}^{^{\prime }}(z)}\right\} \geq \frac{512\alpha v^{3}+64\left(
17\alpha -8\right) v^{2}+16\left( 59\alpha -32\right) v+133\alpha -512}{%
256\left( 2+\alpha \right) v^{2}+8\left( 64+29\alpha \right) v+210\alpha +512%
},  \tag{2.7}
\end{equation}%
\begin{equation}
\mathcal{R}\left\{ \frac{\left( w_{\alpha ,v}\right) _{m}^{^{\prime }}(z)}{%
w_{\alpha ,v}^{^{\prime }}(z)}\right\} \geq \frac{\left( 8v+7\right)
^{3}\alpha }{512\alpha v^{3}+64\left( 25\alpha +8\right) v^{2}+128\left(
11\alpha +4\right) v+553\alpha +512}.  \tag{2.8}
\end{equation}
\end{theorem}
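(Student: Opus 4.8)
The plan is to reproduce the argument of Theorem 2.1 verbatim, replacing the coefficient bound (1.9) for $w_{\alpha,v}$ by the bound (1.10) for $w_{\alpha,v}^{\prime}$ and replacing each coefficient $a_n$ by the corresponding derivative coefficient $(n+1)a_n$. Writing $w_{\alpha,v}^{\prime}(z)=1+\sum_{n=1}^{\infty}(n+1)a_n z^{n}$ and $(w_{\alpha,v})_m^{\prime}(z)=1+\sum_{n=1}^{m}(n+1)a_n z^{n}$, the quotient $w_{\alpha,v}^{\prime}/(w_{\alpha,v})_m^{\prime}$ has exactly the same shape as the quotient handled in (2.3), so the substitution $(1+A)/(1+B)=(1+p)/(1-p)$ transfers without change. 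First I would read off the needed inequality from Lemma 1.1: abbreviating $N=(8v+7)^{3}\alpha$ and $M=256(2+\alpha)v^{2}+8(64+29\alpha)v+210\alpha+512$, inequality (1.10) gives $\sum_{n=1}^{\infty}(n+1)|a_n|\le M/N$, i.e. $\frac{N}{M}\sum_{n=1}^{\infty}(n+1)|a_n|\le 1$. This is the precise analogue of the normalized inequality opening the proof of Theorem 2.1, with $\frac{(8v+7)\alpha}{32v+2\alpha+32}$ replaced by $\frac{N}{M}$.

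For the first estimate I would form
\[
\frac{N}{M}\left\{\frac{w_{\alpha,v}^{\prime}(z)}{(w_{\alpha,v})_m^{\prime}(z)}-\left(1-\frac{M}{N}\right)\right\}=\frac{1+A(z)}{1+B(z)},
\]
with $B(z)=\sum_{n=1}^{m}(n+1)a_n z^{n}$ and $A(z)=B(z)+\frac{N}{M}\sum_{n=m+1}^{\infty}(n+1)a_n z^{n}$, set this equal to $(1+p(z))/(1-p(z))$, and bound $|p(z)|$ by the triangle inequality exactly as in (2.3)--(2.4). The condition $|p(z)|\le 1$ then reduces to
\[
\sum_{n=1}^{m}(n+1)|a_n|+\frac{N}{M}\sum_{n=m+1}^{\infty}(n+1)|a_n|\le 1,
\]
and it suffices to dominate the left side by $\frac{N}{M}\sum_{n=1}^{\infty}(n+1)|a_n|$; after cancelling the common tail this is just $\left(\frac{N}{M}-1\right)\sum_{n=1}^{m}(n+1)|a_n|\ge 0$, i.e. $N-M\ge 0$. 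The lower bound produced is $1-M/N=(N-M)/N$. For the second estimate I would run the mirror computation, forming $\left(\frac{N}{M}+1\right)\{(w_{\alpha,v})_m^{\prime}/w_{\alpha,v}^{\prime}-\frac{N}{N+M}\}$; the same $|p|\le 1$ criterion yields the bound $N/(N+M)$ under the identical hypothesis $N-M\ge 0$.

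The only substantive work is the polynomial bookkeeping. Using $(8v+7)^{3}=512v^{3}+1344v^{2}+1176v+343$ I would verify the two identities $N-M=512\alpha v^{3}+64(17\alpha-8)v^{2}+16(59\alpha-32)v+133\alpha-512$ and $N+M=512\alpha v^{3}+64(25\alpha+8)v^{2}+128(11\alpha+4)v+553\alpha+512$; these are exactly what convert the hypothesis $N-M\ge 0$ into the displayed inequality and what rewrite the two bounds into the stated closed forms. I expect this expansion, and matching it term by term against the stated coefficients, to be the main (if routine) obstacle. I would also double-check the denominator in (2.7): the argument above, like the parallel computations in (2.1) and (2.8), produces $(N-M)/N$, so the natural denominator is $(8v+7)^{3}\alpha$ rather than $M$.
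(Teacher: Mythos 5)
Your proposal reproduces the paper's proof essentially verbatim: the same use of inequality (1.10) to get $\frac{N}{M}\sum_{n\ge 1}(n+1)|a_n|\le 1$, the same Möbius substitution $(1+A)/(1+B)=(1+p)/(1-p)$ with the same reduction of $|p|\le 1$ to the tail inequality and to $(N-M)\sum_{n=1}^{m}(n+1)|a_n|\ge 0$, and the correct expansions of $N\pm M$. Your remark about the denominator of (2.7) is also well taken --- the argument (the paper's included) delivers the lower bound $1-M/N=(N-M)/N$ with $N=(8v+7)^{3}\alpha$, consistent with the pattern of (2.1), so the denominator printed in (2.7) appears to be a misprint.
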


\begin{proof}
From the inequality (1.10) in the Lemma 1.1 we write that%
\begin{equation*}
1+\dsum\limits_{n=1}^{\infty }\left( n+1\right) \left\vert a_{n}\right\vert
\leq 1+\frac{256\left( 2+\alpha \right) v^{2}+8\left( 64+29\alpha \right)
v+210\alpha +512}{\left( 8v+7\right) ^{3}\alpha }
\end{equation*}%
which is equivalent to 
\begin{equation*}
\frac{\left( 8v+7\right) ^{3}\alpha }{256\left( 2+\alpha \right)
v^{2}+8\left( 64+29\alpha \right) v+210\alpha +512}\dsum\limits_{n=1}^{%
\infty }\left( n+1\right) \left\vert a_{n}\right\vert \leq 1
\end{equation*}%
where $a_{n}=\frac{\left( -1\right) ^{n}(2n+\alpha )}{\alpha
.4^{n}n!(v+1)_{n}}.$

Now we write%
\begin{eqnarray*}
&&\frac{\left( 8v+7\right) ^{3}\alpha }{256\left( 2+\alpha \right)
v^{2}+8\left( 64+29\alpha \right) v+210\alpha +512}\times \\
&&\left\{ \frac{w_{\alpha ,v}^{^{\prime }}(z)}{\left( w_{\alpha ,v}\right)
_{m}^{^{\prime }}(z)}-(1-\frac{256\left( 2+\alpha \right) v^{2}+8\left(
64+29\alpha \right) v+210\alpha +512}{\left( 8v+7\right) ^{3}\alpha }%
)\right\} \\
&=&\frac{1+\dsum\limits_{n=1}^{m}\left( n+1\right) a_{n}z^{n}+\frac{\left(
8v+7\right) ^{3}\alpha }{256\left( 2+\alpha \right) v^{2}+8\left(
64+29\alpha \right) v+210\alpha +512}\dsum\limits_{n=m+1}^{\infty }\left(
n+1\right) a_{n}z^{n}}{1+\dsum\limits_{n=1}^{m}\left( n+1\right) a_{n}z^{n}}
\\
&=&\frac{1+p(z)}{1-p(z)}
\end{eqnarray*}%
where%
\begin{equation*}
\left\vert p(z)\right\vert \leq \frac{\frac{\left( 8v+7\right) ^{3}\alpha }{%
256\left( 2+\alpha \right) v^{2}+8\left( 64+29\alpha \right) v+210\alpha +512%
}\dsum\limits_{n=m+1}^{\infty }\left( n+1\right) \left\vert a_{n}\right\vert 
}{2-2\dsum\limits_{n=1}^{m}\left( n+1\right) \left\vert a_{n}\right\vert -%
\frac{\left( 8v+7\right) ^{3}\alpha }{256\left( 2+\alpha \right)
v^{2}+8\left( 64+29\alpha \right) v+210\alpha +512}\dsum\limits_{n=m+1}^{%
\infty }\left( n+1\right) \left\vert a_{n}\right\vert }\leq 1.
\end{equation*}%
The last inequality is equivalent to%
\begin{equation}
\dsum\limits_{n=1}^{m}\left( n+1\right) \left\vert a_{n}\right\vert +\frac{%
\left( 8v+7\right) ^{3}\alpha }{256\left( 2+\alpha \right) v^{2}+8\left(
64+29\alpha \right) v+210\alpha +512}\dsum\limits_{n=m+1}^{\infty }\left(
n+1\right) \left\vert a_{n}\right\vert \leq 1.  \tag{2.9}
\end{equation}%
It suffices to show that the left hand side of (2.9) is bounded above by

$\frac{\left( 8v+7\right) ^{3}\alpha }{256\left( 2+\alpha \right)
v^{2}+8\left( 64+29\alpha \right) v+210\alpha +512}\dsum\limits_{n=1}^{%
\infty }\left( n+1\right) \left\vert a_{n}\right\vert ,$ \ which is
equivalent to 
\begin{equation*}
\frac{512\alpha v^{3}+64\left( 17\alpha -8\right) v^{2}+16\left( 59\alpha
-32\right) +133\alpha -512}{256\left( 2+\alpha \right) v^{2}+8\left(
64+29\alpha \right) v+210\alpha +512}\dsum\limits_{n=1}^{m}\left( n+1\right)
\left\vert a_{n}\right\vert \geq 0.
\end{equation*}%
To prove the result (2.8), we write 
\begin{eqnarray*}
\frac{1+p(z)}{1-p(z)} &=&\left( \frac{\left( 8v+7\right) ^{3}\alpha }{%
256\left( 2+\alpha \right) v^{2}+8\left( 64+29\alpha \right) v+210\alpha +512%
}+1\right) \times \\
&&\left\{ \frac{\left( w_{\alpha ,v}\right) _{m}^{^{\prime }}(z)}{w_{\alpha
,v}^{^{\prime }}(z)}-\frac{\left( 8v+7\right) ^{3}\alpha }{512\alpha
v^{3}+64\left( 25\alpha +8\right) v^{2}+128\left( 11\alpha +4\right)
v+553\alpha +512}\right\} \\
&=&\frac{1+\dsum\limits_{n=1}^{m}\left( n+1\right) a_{n}z^{n}-\frac{\left(
8v+7\right) ^{3}\alpha }{256\left( 2+\alpha \right) v^{2}+8\left(
64+29\alpha \right) v+210\alpha +512}\dsum\limits_{n=m+1}^{\infty }\left(
n+1\right) a_{n}z^{n}}{1+\dsum\limits_{n=1}^{\infty }\left( n+1\right)
a_{n}z^{n}}
\end{eqnarray*}%
where 
\begin{equation*}
\left\vert p(z)\right\vert \leq \frac{\frac{512\alpha v^{3}+64\left(
25\alpha +8\right) v^{2}+128\left( 11\alpha +4\right) v+553\alpha +512}{%
256\left( 2+\alpha \right) v^{2}+8\left( 64+29\alpha \right) v+210\alpha +512%
}\dsum\limits_{n=m+1}^{\infty }\left( n+1\right) \left\vert a_{n}\right\vert 
}{2-2\dsum\limits_{n=1}^{m}\left( n+1\right) \left\vert a_{n}\right\vert -%
\frac{512\alpha v^{3}+64\left( 17\alpha -8\right) v^{2}+16\left( 59\alpha
-32\right) +133\alpha -512}{256\left( 2+\alpha \right) v^{2}+8\left(
64+29\alpha \right) v+210\alpha +512}\dsum\limits_{n=m+1}^{\infty }\left(
n+1\right) \left\vert a_{n}\right\vert }\leq 1.
\end{equation*}%
The last inequality is equivalent to 
\begin{equation}
\dsum\limits_{n=1}^{m}\left( n+1\right) \left\vert a_{n}\right\vert +\frac{%
\left( 8v+7\right) ^{3}\alpha }{256\left( 2+\alpha \right) v^{2}+8\left(
64+29\alpha \right) v+210\alpha +512}\dsum\limits_{n=m+1}^{\infty }\left(
n+1\right) \left\vert a_{n}\right\vert \leq 1.  \tag{2.10}
\end{equation}%
Since the left hand side of (2.10) is bounded above by $\frac{\left(
8v+7\right) ^{3}\alpha }{256\left( 2+\alpha \right) v^{2}+8\left(
64+29\alpha \right) v+210\alpha +512}\dsum\limits_{n=1}^{\infty }\left(
n+1\right) \left\vert a_{n}\right\vert ,$ the proof is complated.
\end{proof}

For some special cases of $v$ it is known that(see \cite{3})%
\begin{equation}
J_{\frac{1}{2}}(z)=\sqrt{\frac{2}{\pi z}}\sin z  \tag{2.11}
\end{equation}%
and%
\begin{equation}
J_{\frac{3}{2}}(z)=\sqrt{\frac{2}{\pi z}}\left( \frac{\sin z}{z}-\cos
z\right) .  \tag{2.12}
\end{equation}%
Now by using some special cases of $\alpha $ and $v$ in Theorem 2.1 and
Theorem 2.2, we can obtain following corollaries.

\begin{corollary}
If we take $\alpha =1,$ $v=\frac{1}{2}$ and $m=0,$ we have $w_{1,\frac{1}{2}%
}(z)=\frac{\sqrt{z}}{2}\left\{ \sin \sqrt{z}+\cos \sqrt{z}-\frac{\sin \sqrt{z%
}}{4}\right\} ,$ $w_{1,\frac{1}{2}}^{^{\prime }}(z)=\frac{1}{4\sqrt{z}}%
\left\{ \left( 1-\sqrt{z}\right) \sin \sqrt{z}+\frac{\left( 1+2\sqrt{z}%
\right) \cos \sqrt{z}}{2}\right\} ,\left( w_{1,\frac{1}{2}}\right) _{0}(z)=z$
and $\left( w_{1,\frac{1}{2}}\right) _{0}^{^{\prime }}(z)=1$. In view of
Theorem 2.2 we get the following inequalities:%
\begin{equation*}
\mathcal{R}\left\{ \frac{1}{4\sqrt{z}}\left( \left( 1-\sqrt{z}\right) \sin 
\sqrt{z}+\frac{\left( 1+2\sqrt{z}\right) \cos \sqrt{z}}{2}\right) \right\}
\geq \frac{45}{1286},
\end{equation*}%
\begin{equation*}
\mathcal{R}\left\{ \frac{8\sqrt{z}}{\left( 1+2\sqrt{z}\right) \cos \sqrt{z}%
-2\left( \sqrt{z}-1\right) \sin \sqrt{z}}\right\} \geq \frac{1331}{2617}.
\end{equation*}
\end{corollary}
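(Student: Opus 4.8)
The plan is to read Corollary 2.3 as the specialisation of Theorem 2.2 to $\alpha=1$, $v=\frac{1}{2}$, $m=0$, which splits the work into two essentially independent parts: determining the closed forms of $w_{1,1/2}$ and $w_{1,1/2}^{\prime}$, and evaluating the polynomial bounds of $(2.7)$ and $(2.8)$ at these parameters after checking the hypothesis of the theorem. First I would handle the closed form. Rather than wrestle with the Bessel representation $(1.6)$ directly, I would exploit the fact that the Maclaurin coefficients collapse: inserting $\alpha=1$, $v=\frac{1}{2}$ into $a_n=\frac{(-1)^n(2n+\alpha)}{\alpha 4^n n!(v+1)_n}$ and using $\left(\frac{3}{2}\right)_n=\frac{(2n+2)!}{2\cdot 4^n (n+1)!}$ gives the clean value $a_n=\frac{(-1)^n}{(2n)!}$. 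Hence
\[
w_{1,1/2}(z)=z+\sum_{n=1}^{\infty}\frac{(-1)^n}{(2n)!}z^{n+1}=z\sum_{n=0}^{\infty}\frac{(-1)^n(\sqrt{z})^{2n}}{(2n)!}=z\cos\sqrt{z},
\]
and differentiating, $w_{1,1/2}^{\prime}(z)=\cos\sqrt{z}-\frac{\sqrt{z}}{2}\sin\sqrt{z}$.

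As an independent check I would also run the computation through $(1.6)$: with $\Gamma\!\left(\frac{3}{2}\right)=\frac{\sqrt{\pi}}{2}$, the formula $(2.11)$, and the recurrence $\sqrt{z}\,J_{1/2}^{\prime}(\sqrt{z})=\frac{1}{2}J_{1/2}(\sqrt{z})-\sqrt{z}\,J_{3/2}(\sqrt{z})$, the bracket $(\alpha-v)J_v(\sqrt{z})+\sqrt{z}J_v^{\prime}(\sqrt{z})$ reduces to $\sqrt{\frac{2}{\pi}}\,z^{1/4}\cos\sqrt{z}$, and the prefactor $\sqrt{2}\,\Gamma\!\left(\frac{3}{2}\right)z^{3/4}$ then returns $z\cos\sqrt{z}$, in agreement with the series.

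Next I would dispose of the arithmetic. Since $m=0$ the partial sum is $(w_{1,1/2})_0(z)=z$ with derivative $1$, so the left-hand sides of $(2.7)$ and $(2.8)$ reduce to $\mathcal{R}\{w_{1,1/2}^{\prime}(z)\}$ and $\mathcal{R}\{1/w_{1,1/2}^{\prime}(z)\}$. Substituting $\alpha=1$, $v=\frac{1}{2}$ into the hypothesis polynomial $512\alpha v^3+64(17\alpha-8)v^2+16(59\alpha-32)v+133\alpha-512$ gives $64+144+216-379=45\ge 0$, so Theorem 2.2 is applicable. The bound in $(2.7)$ has this same numerator $45$ over the denominator $256(2+\alpha)v^2+8(64+29\alpha)v+210\alpha+512=192+372+722=1286$, and the bound in $(2.8)$ is $(8v+7)^3\alpha=11^3=1331$ over $512\alpha v^3+64(25\alpha+8)v^2+128(11\alpha+4)v+553\alpha+512=64+528+960+1065=2617$. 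This produces the two stated constants $\frac{45}{1286}$ and $\frac{1331}{2617}$.

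The only delicate step is the closed form; the substitution is mere bookkeeping. I expect the Bessel reduction to be the main pitfall, because one must apply the derivative recurrence for $J_{1/2}^{\prime}$ and carefully track the $z^{\pm 1/4}$ factors introduced by the argument $\sqrt{z}$. For this reason I would treat the series identity $a_n=\frac{(-1)^n}{(2n)!}$ as the definitive route, since the answer it yields is forced to be analytic with $w_{1,1/2}^{\prime}(0)=1$ --- a normalisation any candidate closed form must satisfy. With $w_{1,1/2}^{\prime}(z)=\cos\sqrt{z}-\frac{\sqrt{z}}{2}\sin\sqrt{z}$ in hand, Theorem 2.2 delivers exactly
\[
\mathcal{R}\left\{\cos\sqrt{z}-\frac{\sqrt{z}}{2}\sin\sqrt{z}\right\}\ge\frac{45}{1286},\qquad \mathcal{R}\left\{\frac{1}{\cos\sqrt{z}-\frac{\sqrt{z}}{2}\sin\sqrt{z}}\right\}\ge\frac{1331}{2617}.
\]
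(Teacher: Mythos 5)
Your proof is correct and follows the same route the paper intends: specialize Theorem 2.2 at $\alpha=1$, $v=\tfrac12$, $m=0$, verify that the hypothesis polynomial evaluates to $45\geq 0$, and read off the constants $\tfrac{45}{1286}$ and $\tfrac{1331}{2617}$; all of that arithmetic checks out. The one point of divergence is the closed form, and there you are right and the printed corollary is wrong. Your coefficient computation $a_{n}=\frac{(-1)^{n}}{(2n)!}$ is correct (equivalently $(3/2)_{n}=\frac{(2n+1)!}{4^{n}n!}$), giving $w_{1,1/2}(z)=z\cos \sqrt{z}$ and $w_{1,1/2}^{\prime }(z)=\cos \sqrt{z}-\frac{\sqrt{z}}{2}\sin \sqrt{z}$, and your Bessel cross-check via $\tfrac12 J_{1/2}(\sqrt{z})+\sqrt{z}J_{1/2}^{\prime }(\sqrt{z})=J_{1/2}(\sqrt{z})-\sqrt{z}J_{3/2}(\sqrt{z})$ confirms it. The expressions printed in the corollary cannot be $w_{1,1/2}$ and its derivative: near the origin they behave like $\frac{\sqrt{z}}{2}+\frac{3z}{8}$ and $\frac{1}{8\sqrt{z}}+\frac{1}{2}$ respectively, so they are not analytic at $0$ and violate the normalization $w(0)=0$, $w^{\prime }(0)=1$ that every function of the form (1.8) must satisfy. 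Consequently the inequalities that Theorem 2.2 actually yields at these parameters are the ones you state,
\begin{equation*}
\mathcal{R}\left\{ \cos \sqrt{z}-\frac{\sqrt{z}}{2}\sin \sqrt{z}\right\} \geq \frac{45}{1286},\qquad \mathcal{R}\left\{ \frac{1}{\cos \sqrt{z}-\frac{\sqrt{z}}{2}\sin \sqrt{z}}\right\} \geq \frac{1331}{2617},
\end{equation*}
rather than the displayed ones; your decision to trust the series route precisely because it forces the normalization was the right safeguard, and it is what exposes the error in the stated closed forms.
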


\begin{corollary}
If we take $\alpha =\frac{3}{2},$ $v=\frac{1}{2}$ and $m=0,$ we have $w_{%
\frac{3}{2},\frac{1}{2}}(z)=\frac{\sqrt{z}}{3}(2\sin \sqrt{z}+\cos \sqrt{z})-%
\frac{\sin \sqrt{z}}{6},$ $w_{\frac{3}{2},\frac{1}{2}}^{^{\prime }}(z)=\frac{%
4\sin \sqrt{z}+\cos \sqrt{z}}{12\sqrt{z}}+\frac{2\cos \sqrt{z}-\sin \sqrt{z}%
}{6},$ $\left( w_{\frac{3}{2},\frac{1}{2}}\right) _{0}(z)=z$ and $\left( w_{%
\frac{3}{2},\frac{1}{2}}\right) _{0}^{^{\prime }}(z)=1.$ In view of Theorem
2.2 we get the following inequalities: 
\begin{equation*}
\mathcal{R}\left\{ \frac{4\sin \sqrt{z}+\cos \sqrt{z}}{12\sqrt{z}}+\frac{%
2\cos \sqrt{z}-\sin \sqrt{z}}{6}\right\} \geq \frac{1031}{2962},
\end{equation*}%
\begin{equation*}
\mathcal{R}\left\{ \frac{12\sqrt{z}}{\left( 4\sqrt{z}+1\right) \cos \sqrt{z}%
-2\left( \sqrt{z}-2\right) \sin \sqrt{z}}\right\} \geq \frac{3993}{6955}.
\end{equation*}
\end{corollary}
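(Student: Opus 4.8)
The plan is to derive Corollary 2.2 as a direct specialization of Theorem 2.2 to the parameters $\alpha=\tfrac32$, $v=\tfrac12$, together with the choice $m=0$. The first thing I would do is confirm that these values meet the hypothesis of Theorem 2.2, namely that
\[
512\alpha v^{3}+64\left(17\alpha-8\right)v^{2}+16\left(59\alpha-32\right)v+133\alpha-512\geq 0 .
\]
Substituting $\alpha=\tfrac32$, $v=\tfrac12$ collapses the left-hand side to the single number $\tfrac{1031}{2}>0$, so the hypothesis holds and Theorem 2.2 is applicable. (Note that the same expression is the numerator appearing in (2.7), so this computation will be reused.)

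Next I would put $w_{3/2,1/2}$ and $w_{3/2,1/2}'$ into closed form. Starting from the representation (1.6) with $v=\tfrac12$, I would first simplify the bracketed combination using the Bessel recurrence $\sqrt z\,J_{v}'(\sqrt z)=\sqrt z\,J_{v-1}(\sqrt z)-v\,J_{v}(\sqrt z)$, which turns $(\alpha-v)J_{v}+\sqrt z\,J_{v}'$ into $(\alpha-2v)J_{v}+\sqrt z\,J_{v-1}$, i.e. into a combination of $J_{1/2}$ and $J_{-1/2}$. Inserting $J_{1/2}(\sqrt z)=\sqrt{2/(\pi\sqrt z)}\,\sin\sqrt z$ from (2.11) and $J_{-1/2}(\sqrt z)=\sqrt{2/(\pi\sqrt z)}\,\cos\sqrt z$, together with $\Gamma(3/2)=\sqrt\pi/2$, all the $\sqrt{2/\pi}$ factors and fractional powers of $z$ cancel against the $z^{1-v/2}$ prefactor, leaving an elementary trigonometric expression; one more differentiation (keeping track of $dz=2\sqrt z\,d\sqrt z$) yields $w_{3/2,1/2}'(z)$. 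Before trusting these forms I would expand them in powers of $z$ and check that the coefficients match $a_{n}=\frac{(-1)^{n}(2n+\alpha)}{\alpha 4^{n}n!(v+1)_{n}}$ at $\alpha=\tfrac32$, $v=\tfrac12$; in particular the leading correction should reproduce $a_{1}=-\tfrac{7}{18}$.

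With $m=0$ the partial sum is $(w_{3/2,1/2})_{0}(z)=z$ and $(w_{3/2,1/2})_{0}'(z)=1$, so the two ratios in Theorem 2.2 reduce simply to $w_{3/2,1/2}'(z)$ and its reciprocal $1/w_{3/2,1/2}'(z)$. It then remains to evaluate the two rational bounds at the chosen parameters. For (2.7) the numerator is the $\tfrac{1031}{2}$ found above and the denominator $256(2+\alpha)v^{2}+8(64+29\alpha)v+210\alpha+512$ equals $1481$, giving the bound $\tfrac{1031}{2962}$. For (2.8) one has $(8v+7)^{3}\alpha=11^{3}\cdot\tfrac32=\tfrac{3993}{2}$ over the denominator $512\alpha v^{3}+64(25\alpha+8)v^{2}+128(11\alpha+4)v+553\alpha+512=\tfrac{6955}{2}$, giving $\tfrac{3993}{6955}$. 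Substituting the closed form of $w_{3/2,1/2}'$ for the left-hand sides then produces the two displayed inequalities.

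The only genuinely delicate step is the second one: reducing (1.6) to a clean $\sin\sqrt z$–$\cos\sqrt z$ expression and differentiating it without slips. The bookkeeping of the half-integer Gamma value, the $z^{1-v/2}$ prefactor, and the derivative relation for $J_{1/2}$ is exactly where a stray factor of $2$ or a sign error creeps in, so I would rely on the power-series cross-check of the previous paragraph to validate the closed forms; everything downstream is routine arithmetic with the two rational bounds.
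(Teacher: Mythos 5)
Your proposal is correct and follows the same route as the paper, which simply specializes Theorem 2.2 to $\alpha=\tfrac32$, $v=\tfrac12$, $m=0$ (the paper offers no further argument for the corollary); your evaluations of the hypothesis as $\tfrac{1031}{2}>0$ and of the two bounds as $\tfrac{1031/2}{1481}=\tfrac{1031}{2962}$ and $\tfrac{3993/2}{6955/2}=\tfrac{3993}{6955}$ all check out. Your power-series cross-check of the closed form is a worthwhile addition: carrying it out reveals that the paper's displayed expression for $w_{3/2,1/2}^{\prime}$ does not expand to $1-\tfrac{7}{9}z+\cdots$ (it even contains a term singular at the origin), so the trigonometric forms printed in the corollary appear to contain slips that your procedure would catch.
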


\begin{corollary}
If we take $\alpha =5,$ $v=\frac{3}{2}$ and $m=0,$ we have $w_{5,\frac{3}{2}%
}(z)=\frac{21}{10}\left( \frac{\sin \sqrt{z}}{\sqrt{z}}-\cos \sqrt{z}\right)
+\frac{9}{20}\left( \frac{\cos \sqrt{z}}{\sqrt{z}}-\frac{\sin \sqrt{z}}{z}%
\right) +\frac{3}{10}\sin \sqrt{z},$ $w_{5,\frac{3}{2}}^{^{\prime }}(z)=%
\frac{21}{20}\left( \frac{\cos \sqrt{z}}{z}-\frac{\sin \sqrt{z}}{z\sqrt{z}}+%
\frac{\sin \sqrt{z}}{\sqrt{z}}\right) +\frac{9}{20}\left( \frac{\sin \sqrt{z}%
}{z^{2}}-\frac{\cos \sqrt{z}}{z\sqrt{z}}\right) -\frac{9}{40}\frac{\sin 
\sqrt{z}}{z}+\frac{3}{20}\frac{\cos \sqrt{z}}{\sqrt{z}},$ $\left( w_{5,\frac{%
3}{2}}\right) _{0}(z)=z$ and $\left( w_{5,\frac{3}{2}}\right) _{0}^{^{\prime
}}(z)=1.$ In view of Theorem 2.1 we get the following inequalities: 
\begin{equation*}
\mathcal{R}\left\{ \frac{14z\left( \sin \sqrt{z}-\sqrt{z}\cos \sqrt{z}%
\right) +\left( 2z-3\right) \sqrt{z}\sin \sqrt{z}+3z\cos \sqrt{z}}{z^{5/2}}%
\right\} \geq \frac{20}{57},
\end{equation*}%
\begin{equation*}
\mathcal{R}\left\{ \frac{z^{5/2}}{14z\left( \sin \sqrt{z}-\sqrt{z}\cos \sqrt{%
z}\right) +\left( 2z-3\right) \sqrt{z}\sin \sqrt{z}+3z\cos \sqrt{z}}\right\}
\geq \frac{57}{740}.
\end{equation*}
\end{corollary}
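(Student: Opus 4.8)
The plan is to obtain both displayed inequalities as the specialization of Theorem 2.1 to the parameters $\alpha=5$, $v=\tfrac32$, $m=0$, after expressing $w_{5,3/2}$ in closed form. First I would check that Theorem 2.1 is applicable: here $\alpha=5>0$, $v=\tfrac32>-\tfrac78$, and the admissibility condition reads
\[
8(\alpha-4)v+5\alpha-32=8\cdot1\cdot\tfrac32+25-32=5\ge0 .
\]
Because $m=0$ the partial sum collapses to $(w_{5,3/2})_0(z)=z$, so formulas (2.1) and (2.2) specialize to
\[
\mathcal{R}\left\{\frac{w_{5,3/2}(z)}{z}\right\}\ge\frac{5}{(8\cdot\frac32+7)\,5}=\frac{5}{95}=\frac{1}{19},
\]
\[
\mathcal{R}\left\{\frac{z}{w_{5,3/2}(z)}\right\}\ge\frac{95}{95+48+10+32}=\frac{95}{185}=\frac{19}{37}.
\]
These two rational numbers already carry all the analytic content; what remains is purely a matter of rewriting $w_{5,3/2}(z)/z$ and its reciprocal in the elementary form displayed in the statement.

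For the closed form I would start from the Bessel representation (1.6) with $v=\tfrac32$, $\alpha=5$, giving
\[
w_{5,3/2}(z)=\frac{2^{3/2}}{5}\,\Gamma\!\left(\tfrac52\right)z^{1/4}\Big(\tfrac72 J_{3/2}(\sqrt z)+\sqrt z\,J_{3/2}^{\prime}(\sqrt z)\Big).
\]
Using the recurrence $J_v^{\prime}(x)=J_{v-1}(x)-\tfrac{v}{x}J_v(x)$ I would replace $\sqrt z\,J_{3/2}^{\prime}(\sqrt z)=\sqrt z\,J_{1/2}(\sqrt z)-\tfrac32 J_{3/2}(\sqrt z)$, so the bracket reduces to $2J_{3/2}(\sqrt z)+\sqrt z\,J_{1/2}(\sqrt z)$, which involves only the half-integer orders $v=\tfrac12,\tfrac32$. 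Substituting the elementary expressions (2.11) and (2.12), the prefactor $z^{1/4}$ multiplies the common factor $\sqrt{2/(\pi\sqrt z)}$ coming from $J_{1/2}$ and $J_{3/2}$, and together with $\tfrac{2^{3/2}}{5}\Gamma(\tfrac52)$ all powers of $\pi$ and all fractional powers of $z$ cancel, leaving an elementary combination of $\sin\sqrt z$ and $\cos\sqrt z$. (Only $w_{5,3/2}$ and $(w_{5,3/2})_0=z$ are needed here, since this corollary invokes Theorem 2.1 rather than the derivative bounds of Theorem 2.2.)

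Finally I would identify the displayed quotient $N(z)/z^{5/2}$, with $N(z)$ the given trigonometric numerator, as a positive constant multiple of $w_{5,3/2}(z)/z$; the constant is pinned down to $\tfrac{20}{3}$ by the normalization $w_{5,3/2}(z)/z\to1$ as $z\to0$. Since $\tfrac{20}{3}>0$ the real part scales, so
\[
\mathcal{R}\left\{\frac{N(z)}{z^{5/2}}\right\}=\frac{20}{3}\,\mathcal{R}\left\{\frac{w_{5,3/2}(z)}{z}\right\}\ge\frac{20}{3}\cdot\frac{1}{19}=\frac{20}{57},
\]
and, taking the reciprocal of the constant,
\[
\mathcal{R}\left\{\frac{z^{5/2}}{N(z)}\right\}=\frac{3}{20}\,\mathcal{R}\left\{\frac{z}{w_{5,3/2}(z)}\right\}\ge\frac{3}{20}\cdot\frac{19}{37}=\frac{57}{740},
\]
which are exactly the two claimed bounds. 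The only genuinely delicate step is the closed-form reduction: keeping track of $\Gamma(\tfrac52)=\tfrac{3\sqrt\pi}{4}$, the factor $2^{3/2}$, and the $\sqrt{2/(\pi\sqrt z)}$ factors from the Bessel functions, and verifying that they collapse to a single rational constant (so that $w_{5,3/2}$ is indeed analytic and normalized). I would use the $z\to0$ limit as the consistency check that fixes this constant and hence the factor $\tfrac{20}{3}$ relating the displayed expression to $w_{5,3/2}(z)/z$.
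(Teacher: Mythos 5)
Your overall strategy is exactly the one the paper intends: verify the hypothesis $8(\alpha-4)v+5\alpha-32=12+25-32=5\ge 0$, specialize (2.1) and (2.2) with $m=0$ to get the bounds $\tfrac{1}{19}$ and $\tfrac{19}{37}$ for $\mathcal{R}\{w_{5,3/2}(z)/z\}$ and $\mathcal{R}\{z/w_{5,3/2}(z)\}$, and then rescale by the positive constant $\tfrac{20}{3}$ (resp.\ $\tfrac{3}{20}$) to obtain $\tfrac{20}{57}$ and $\tfrac{57}{740}$. That numerology is correct and is clearly what generated the stated constants.

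The step that does not go through is the identification of the printed quotient with $\tfrac{20}{3}\,w_{5,3/2}(z)/z$. If you carry out the Bessel reduction you describe --- and your reduction of the bracket to $2J_{3/2}(\sqrt z)+\sqrt z\,J_{1/2}(\sqrt z)$ is correct --- the constants collapse to $\tfrac{2^{3/2}}{5}\Gamma(\tfrac52)\sqrt{2/\pi}=\tfrac35$ and you obtain
\[
w_{5,3/2}(z)=\frac{6}{5}\Bigl(\frac{\sin\sqrt z}{\sqrt z}-\cos\sqrt z\Bigr)+\frac{3}{5}\sqrt z\,\sin\sqrt z
= z-\frac{7}{50}z^{2}+\cdots,
\]
which is consistent with $a_{1}=-\tfrac{7}{50}$ from (1.8). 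Hence $\tfrac{20}{3}\,w_{5,3/2}(z)/z=\bigl(8\sin\sqrt z-8\sqrt z\cos\sqrt z+4z\sin\sqrt z\bigr)/z^{3/2}$. The numerator $N(z)$ displayed in the corollary, however, expands as $N(z)=z^{2}+\tfrac{14}{3}z^{5/2}-\cdots$, so $N(z)/z^{5/2}=z^{-1/2}+\tfrac{14}{3}-\cdots$ is not even analytic at the origin, let alone equal to $\tfrac{20}{3}+O(z)$; it is $\tfrac{20}{3}$ times the closed form for $w_{5,3/2}$ printed in the corollary, and that printed closed form contains genuine half-integer powers of $z$ (its expansion begins $\tfrac{3}{20}\sqrt z+\tfrac{7}{10}z-\cdots$) and so disagrees with the series definition (1.8). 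Consequently your proposed $z\to 0$ normalization check --- which is exactly the right check to make --- would not pin down the constant but would reveal that no constant works: the displayed quotient is not a constant multiple of $w_{5,3/2}(z)/z$, and the two inequalities as printed cannot be deduced from Theorem 2.1 in the way you (and the paper) intend. Your plan does prove the corollary once $N(z)/z^{5/2}$ is replaced by the corrected expression above; as written, the final identification step fails.
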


\section{Illustrative examples and image domains}

In this section, illustrative examples along with the geometrical
descriptions of the image domains of the unit disk by the ratio of
normalized Dini function to its sequence of partial sums or the the ratio of
its sequence of partial sums to the function which we considered in our
corollaries in section 2, have been given in following figures.

\begin{example}
The image domains of $f_{1}(z)=\frac{1}{4\sqrt{z}}\left( \left( 1-\sqrt{z}%
\right) \sin \sqrt{z}+\frac{\left( 1+2\sqrt{z}\right) \cos \sqrt{z}}{2}%
\right) $ and $f_{2}(z)=\frac{8\sqrt{z}}{\left( 1+2\sqrt{z}\right) \cos 
\sqrt{z}-2\left( \sqrt{z}-1\right) \sin \sqrt{z}}$ are shown in Figure 1,
while the image domains of $f_{3}(z)=\frac{4\sin \sqrt{z}+\cos \sqrt{z}}{12%
\sqrt{z}}+\frac{2\cos \sqrt{z}-\sin \sqrt{z}}{6}$ and $f_{4}(z)=\frac{12%
\sqrt{z}}{\left( 4\sqrt{z}+1\right) \cos \sqrt{z}-2\left( \sqrt{z}-2\right)
\sin \sqrt{z}}$ \ are shown in Figure 2. 
\begin{equation*}
\begin{array}{cc}
\FRAME{itbpF}{2.7838in}{2.7838in}{0in}{}{}{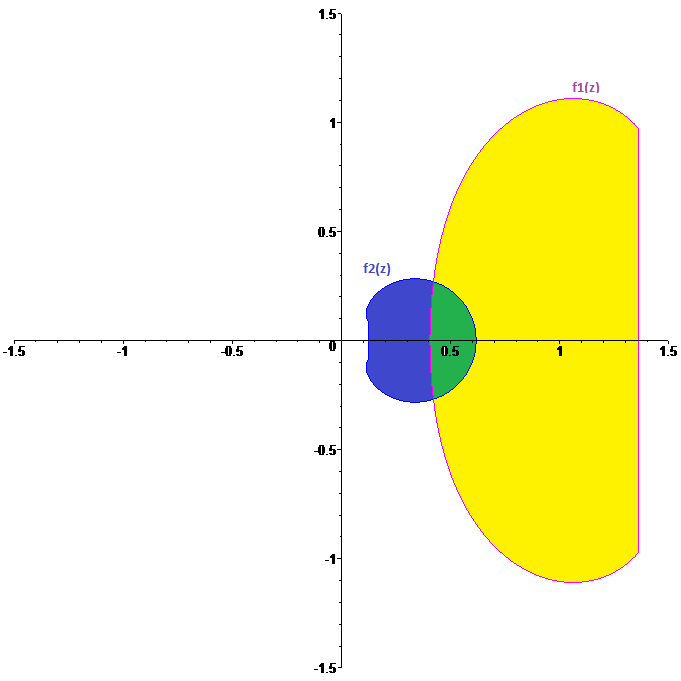}{\special{language
"Scientific Word";type "GRAPHIC";maintain-aspect-ratio TRUE;display
"USEDEF";valid_file "F";width 2.7838in;height 2.7838in;depth
0in;original-width 7.0941in;original-height 7.0941in;cropleft "0";croptop
"1";cropright "1";cropbottom "0";filename 'f1-f2.png';file-properties
"XNPEU";}} & \FRAME{itbpF}{2.7838in}{2.7838in}{0in}{}{}{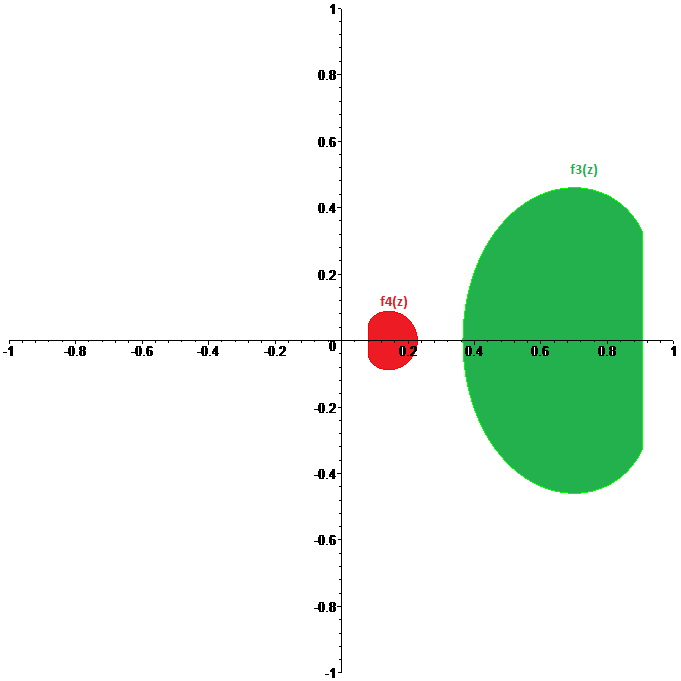}{\special%
{language "Scientific Word";type "GRAPHIC";maintain-aspect-ratio
TRUE;display "USEDEF";valid_file "F";width 2.7838in;height 2.7838in;depth
0in;original-width 7.0941in;original-height 7.0941in;cropleft "0";croptop
"1";cropright "1";cropbottom "0";filename 'f3-f4.png';file-properties
"XNPEU";}} \\ 
Figure\text{ }1 & Figure\text{ }2%
\end{array}%
\end{equation*}
\end{example}

\begin{example}
We have the image domains of $f_{5}(z)=\frac{14z\left( \sin \sqrt{z}-\sqrt{z}%
\cos \sqrt{z}\right) +\left( 2z-3\right) \sqrt{z}\sin \sqrt{z}+3z\cos \sqrt{z%
}}{z^{5/2}}$ and $f_{6}(z)=\frac{z^{5/2}}{14z\left( \sin \sqrt{z}-\sqrt{z}%
\cos \sqrt{z}\right) +\left( 2z-3\right) \sqrt{z}\sin \sqrt{z}+3z\cos \sqrt{z%
}}$ in Figure 3 and Figure 4, respectively.%
\begin{equation*}
\begin{array}{cc}
\FRAME{itbpF}{2.7838in}{2.7838in}{0in}{}{}{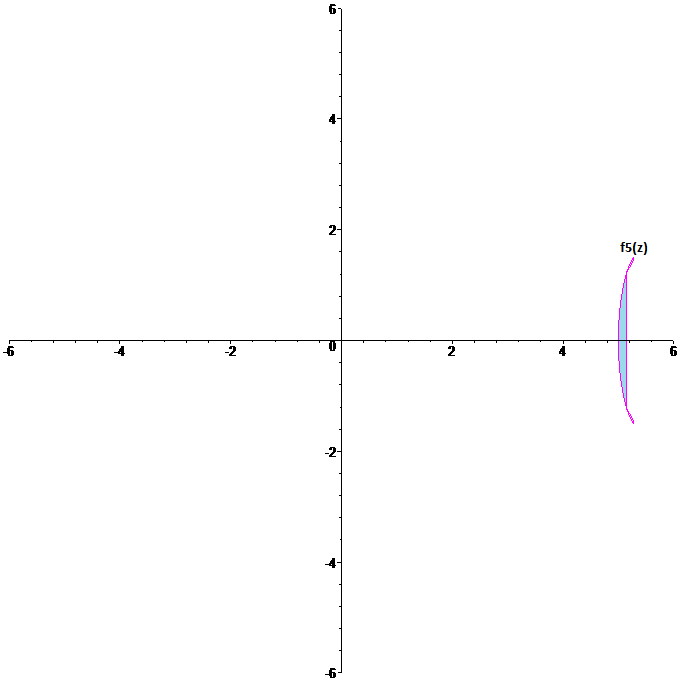}{\special{language
"Scientific Word";type "GRAPHIC";maintain-aspect-ratio TRUE;display
"USEDEF";valid_file "F";width 2.7838in;height 2.7838in;depth
0in;original-width 7.0941in;original-height 7.0941in;cropleft "0";croptop
"1";cropright "1";cropbottom "0";filename 'f5.png';file-properties "XNPEU";}}
& \FRAME{itbpF}{2.7838in}{2.7838in}{0in}{}{}{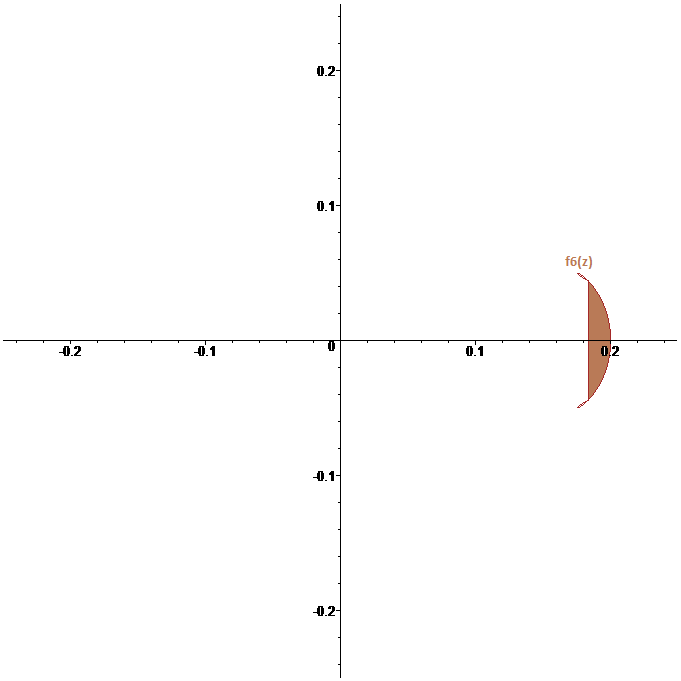}{\special{language
"Scientific Word";type "GRAPHIC";maintain-aspect-ratio TRUE;display
"USEDEF";valid_file "F";width 2.7838in;height 2.7838in;depth
0in;original-width 7.0941in;original-height 7.0941in;cropleft "0";croptop
"1";cropright "1";cropbottom "0";filename 'f6.png';file-properties "XNPEU";}}
\\ 
Figure\text{ }3 & Figure\text{ }4%
\end{array}%
\end{equation*}
\end{example}

\end{document}